\newtheorem{thm}{Theorem}[section]
\newtheorem{lemma}[thm]{Lemma}
\theoremstyle{definition}
\newtheorem{remark}[thm]{Remark}
\newtheorem{definition}[thm]{Definition}
\newtheorem{alg}[thm]{Algorithm}
\newif\ifnotes\notestrue
\newcommand{\roll}{\mathbf{r}}
\newcommand{\parar}{\mathbf{s}}
\newcommand{\move}{\mathbf{m}}
\newcommand{\moveto}[1]{\mathbf{m}_{\text{to $#1$}}}
\newcommand{\resign}[1]{\mathbf{m}_{\text{$#1$}}}
\newcommand{\action}{\mathbf{a}}
\newcommand{\vr}{{V_{\roll}}}
\newcommand{\x}{\mathbf{x}}
\newcommand{\y}{\mathbf{y}}
\newcommand{\vect}{\mathcal{V}}
\newcommand{\vvv}{W}
\renewcommand{\H}{\mathbb{H}}
\newcommand{\K}{\mathbb{K}}
\newcommand{\statespace}{\mathcal{X}}
\newcommand{\X}{\mathcal{X}}
\newcommand{\A}{\mathcal{A}}
\newcommand{\indicator}{\,\mathbf{\chi}}
\newcommand{\R}{{\mathbb R}}
\newcommand{\Z}{{\mathbb Z}}
\newcommand{\I}{\mathcal{I}}
\newcommand{\actions}{\mathcal{A}}
\newcommand{\proba}{\operatorname{\bf P}}
\newcommand{\EE}{\operatorname{\bf E}}
\newcommand{\ind}[1]{\indicator_{#1}}
\newcommand{\ini}{\imath}
\newcommand{\fin}{\Delta}
\newcommand{\taupig}{\tau}
\begin{document}

\date{May 28, 2014}
\title{A finite exact algorithm to solve a dice game} 
\author{Fabi\'an Crocce\footnote{King Abdullah University of Science and Technology, Saudi Arabia \& Universidad de la Rep\'ublica, Uruguay} \ and Ernesto  Mordecki\footnote{Universidad de la Rep\'ublica, Uruguay}} 

\maketitle

\begin{abstract}
We provide an algorithm to find the value and an optimal strategy of the solitaire variant of the Ten Thousand dice game in the framework of Markov Control Processes.
Once an optimal critical threshold is found, the set of non-stopping states of the game becomes finite,
and the solution is found by a backwards algorithm that gives the values for 
each one of these states of the game.
The algorithm is finite and exact.
The idea to find the critical threshold comes from the \emph{continuous pasting condition} used in optimal stopping problems for {continuous-time} processes with jumps.
\end{abstract}


\section{Introduction}\label{section:intro}
The emergence of Probability Theory is closely related to the practice of dice games,
as masterly exposed by Hald \cite{Hald}.   
Inspired by this fact, and by the power of the mathematical method, we consider a popular dice game known as  ``Ten Thousand".  

In this game, played with five dice, several players, by turns, roll the dice several times.
Each possible outcome of a roll has an assigned score, {which may be zero}. 
If after rolling the dice, a strictly positive score is obtained, the player can roll again some dice to increase
his turn account, or he can stop rolling to bank his turn account into his general account, ending his turn; otherwise, if null score is obtained, the player looses the accumulated turn score, also ending his turn.

In consequence, each turn consists in a sequence of rolls, ending either when the player obtains no score
or when he decides to stop and bank his accumulated turn score.
Here arises the problem of taking an optimal decision.
The goal of the game is to be the first player in reaching a certain amount of points, usually 10000. 
This game, also known as Zilch and Farkle, among other names,  has several versions with minor differences (some of them played with six dice).

In this paper we consider the problem that faces an individual player who aims to maximize his turn score,
what can be considered a solitaire variant of the game. 
Modeling this optimization problem in the framework of Markov Control Processes (MCP),
we provide a finite exact algorithm that gives the value function and an optimal strategy for the considered game.
We expect that this algorithm can constitute a first step in finding optimal strategies for the original multi-player game. 

Despite the popularity of dice games and the interest of the probabilistic community in the topic, 
only a few references can be found concerning the family of dice games we consider.
Roters \cite{Roters} solves the solitaire version of the Pig game (a simpler variant with one dice, see subsection \ref{pig}). 
Shortly afterward, 
Haigh and Roters \cite{Haigh} consider a variant of the same solitaire game, 
consisting in minimizing the number of turns needed to reach a target.
Concerning competitive games, 
Neller and Presser \cite{Neller} solve the competitive version of the Pig game. 
More recently, Tijms \cite{Tijms} considered also the competitive Pig game and a simultaneous decision taking variant,
named Hog (see also \cite{Tijms1} where variants of the game are analyzed). 

As mentioned, the theoretical framework to study the solitaire game we consider is theory of Markov Control Processes.
Our problem happens to be a transient MCP under the expected total cost criteria,
with { countably infinite }
state space, finite number of actions at each state, and unbounded reward function.

Among the first papers considering the expected total cost criteria we find the paper by Blackwell (1967) \cite{blackwell2}. 
A general reference of both solitaire and competitive games with finite state space, 
can be found in the book by Filar and Vrieze \cite{Filar}.
Pliska \cite{pliska} considers MCP with infinite state space with bounded payoff function.
Our example does not fit in this framework, and similar simpler examples (see subsection \ref{pig}) show that
the Dynamic Programming Equation is not expected to have a unique solution.
This situation is considered by Hern\'andez-Lerma et al. \cite{lerma}, where an ad-hoc norm is proposed to restrict
the search space for the value function. Nevertheless, the conditions required in \cite{lerma} to prove the transience of
the process under all possible strategies, 
require certain uniformity condition that does not seem to be fulfilled in our case.

The rest of the paper is as follows. 
In section \ref{section:rules} we describe the rules of the game,
and in section \ref{section:solitaire} its mathematical model.
After writing the Dynamic Programming Equation (DPE) of the optimization problem, 
in section \ref{section:main} we present our main result, 
consisting in a finite algorithm that provides an exact solution to the DPE, 
resulting in the solution to the game. 
In section \ref{section:other} we solve some related games, and in section \ref{section:final} we present a brief discussion and some conclusions.
\section{Ten Thousand game}\label{section:rules}
{In the solitaire version of the Ten Thousand game that we consider, one player aims to maximize the score of one turn when playing the game described above.
This turn consists of a sequence of rolls.}
%

%
\subsection{Description of one roll.} 
The player rolls $n$ dice ($1\leq n\leq 5$). A score is assigned to each one of the dice combinations described in Table \ref{table:1}. 
\begin{table}
\begin{center}
\begin{tabular}{{l}{r}{r}|{l}{r}{r}}
\hline
Combination & Points & Chips & Combination & Points & Chips \\
\hline
Each 5& $50$ & $1$ & Three 4s&$400$ & $8$ \\
Each 1& $100$  & $2$ & Three 5s&$500$ & $10$ \\ 
Three 2s&$200$ & $4$ & Three 6s&$600$& $12$ \\  
Three 3s&$300$ & $6$ & Three 1s&$1000$ & $20$ \\ 
\hline
\end{tabular}
\end{center}
\caption{Combination of dice and corresponding scores in the Ten Thousand dice game}
\label{table:1}
\end{table}
We introduce a \emph{dice configuration}  $i=[f,o,t]$  
to summarize the outcome of a roll that has $f$ fives, $o$ ones and three $t$-s for $t=2,3,4,6$. By $t=0$ we denote the absence of three of a kind for these numbers.

Mathematically equivalent, to simplify the treatment, we divide all point amounts by 50. 
To link with the real game, the reader can consider ``big points" or ``chips" costing 50 points each.
Observe that three fives or three ones are counted in $f$ and $o$ respectively. 
Denote by $\mathbf{0}=[0,0,0]$ the non-scoring configuration.
The set  $\I$  of configurations with non vanishing score is listed in Table \ref{table:freq}. 			

To each configuration $i=[f,o,t]$ we associate a score $s(i)$,
a number of scoring dice $d(i)$,
and a number of scoring combinations $c(i)$,
through the formulas:
\begin{align*}
s(i)&=f+2o+14\indicator_{\{o\geq 3\}}+7\indicator_{\{f\geq 3\}}+2t,\\
d(i)&=f+o+3\indicator_{\{t\neq 0\}},\\
c(i)&=f+o+\indicator_{\{t\neq 0\}}.
\end{align*}
Observe that three fives or three ones count as three scoring combinations.


\begin{definition}\label{def:smaller} We say that a configuration $j$ is \emph{smaller} than a configuration $i$ when the set of scoring combinations of $j$ is a non-void strict subset of the set of scoring combinations of $i$. In this case we write $j\prec i$ and have $s(j)<s(i)$, $d(j)<d(i)$, $0<c(j)<c(i)$.
\end{definition}

\subsection{Description of the game.}
We denote by $i_1,\dots,i_k,\dots$ the successive outcomes of a sequence of rolls, by $\tau_k$ the points accumulated up to the $k$-th dice roll, and by $n_k$ the amount of dice available to roll after the $k$-th roll. 
The values of $\tau_k$ and $n_k$ depend on: $\tau_{k-1}$, the random outcome $i_k$, and the action chosen by the player, as follows.
In the first roll the player begins with zero points and five dice to roll, i.e. $\tau_0=0$ and $n_0=5$.
Depending on the configuration $i_k$ obtained in the $k$-th roll,
the player has the following actions to take:
\begin{itemize}
\item[(i)] If no score is obtained ($s(i_k)=0$) the player looses the accumulated score ($\tau_k=0$) ending the game.
\item[(ii)] If all rolled dice give score ($d(i_k)=n_{k-1}$) then the score increases to 
$\tau_k=\tau_{k-1}+s(i_k)$ and
the player can either stop, earning $\tau_k$ points and finishing the game, or he can roll the five dice anew ($n_k=5$). 
\item[(iii)] If not all rolled dice give score and only one scoring combination is obtained ($d(i_k)<n_{k-1}$ and $c(i_k)=1$) then the score increases to $\tau_k=\tau_{k-1}+s(i_k)$ and the player
can either stop, earning $\tau_k$ points, or he can roll $n_k=n_{k-1}-d(i_k)$ dice. 
\item[(iv)] If not all rolled dice give score and more than one scoring combination is obtained ($d(i_k)<n_{k-1}$ and $c(i_k)>1$) then, in addition to the actions described in (iii), the player can choose a configuration $j_k\prec i_k$ (see Definition \ref{def:smaller}),  
increasing the score only to $\tau_k=\tau_{k-1}+s(j_k)$, but 
obtaining $n_k=n_{k-1}-d(j_k)$ dice to roll (the player resigns score to roll more dice).
\end{itemize}
Table \ref{table:outcomes} summarizes the above described actions.
After each roll, the player goes on rolling the non-scoring dice ({and eventually} part of the scoring-dice in case (iv)), 
taking decisions  according to (ii)-(iv),
until he decides to stop, or until he gets no scoring dice (i). 
The goal of the game is to maximize the accumulated score at the end of the game.
Observe that, unless situation (ii) is achieved,
the number of rolling dice of the subsequent rolls is strictly decreasing, as at each step some scoring dice should be set aside. 
It should be noticed that any time {the player} decides to roll, he risks losing the accumulated turn account and finishing the game with 0 points.

\begin{table}
\begin{center}
\begin{tabular}{c|c|c|c}

$s(i)$ & $d(i)$ & $c(i)$ & Actions \\
\hline
\hline
0 & 0 & 0	&	Stop	\\
\hline
\multirow{3}{*}{$>0$} & $n$ 	&	&Stop / Roll 5 dice	\\ \cline{2-4}
&	\multirow{2}{*}{$<n$}& $1$ & Stop / Roll $n-d(i)$ dice	\\ \cline{3-4}
&	& $>1$ & Stop / Roll $n-d(i)$ dice / Roll $n-d(j)$ dice $(j\prec i)$	\\
\hline
\end{tabular}
\end{center}
\caption{Possible actions after rolling $n$ dice and obtaining configuration $i$}
\label{table:outcomes}
\end{table}

\begin{remark}\label{remark:1}
Observe that, according to (ii), the player can in principle roll the dice an arbitrary number of times, accumulating an arbitrary large score.
\end{remark}

%
%
%
%

\section{Mathematical model of the solitaire game}\label{section:solitaire}

{We model the Solitaire Ten Thousand game as a Markov Control Process
in which one player aims to maximize the non-discounted sum of the rewards. 
To this end we define the state space $\statespace$, 
the sets $\actions(\x)$ of actions the player can take at each state $\x\in\statespace$, and for each pair $(\x,\action)$ with $\action \in \actions(\x)$ we define a reward $r(\x, \action)$ and a probability distribution $q(\cdot|\x,\action)$ over the state space $\statespace$ for the following state.
\subsection{\bf States}
Each state $\x$ of the solitaire game includes three elements: 
an amount $\tau$ of accumulated chips;
the last obtained configuration $i=[f,o,t]$; and
a number $n=1,\dots,5$ of available dice to roll. 
It is necessary to add two singular states:
the initial state $\ini=(0,\emptyset,5)$ and
a final absorbing state $\fin=(0,\mathbf{0},0)$.

In this way we define the state space of the MCP process as 
$$
\statespace=\{\ini,\fin\} \cup \left\{\x=(\tau,i,{n})\colon\tau \in \{1,2,\dots\}, i\in \I,n \in \{1,2,3,4,5\}\right\}.
$$ 
We observe that the state space of the game is a countably infinite set, in accordance with Remark \ref{remark:1}.
\subsection{\bf Actions, transition probabilities and rewards}\label{rules}
When from a state $\x=(\tau,i,{n})$ all the $n$ available dice are rolled (action that we call \emph{roll} and denote by $\roll$) the probability distribution for the next state $\y$ is determined by 
\begin{equation}\label{eq:rolltransitions}
q\left(\y|\x, \roll \right)=\begin{cases}
{f(n,\mathbf{0})/ 6^{n}}, &\text{if $\y=\fin$}\\
{f({n},j)/ 6^{n}},	&\text{ if $\y=\left(\tau+s(j),j,n-d(j)+5 \ind{\{n=d(j)\}}\right)$},\\
0,					&\text{ otherwise,}
\end{cases}
\end{equation}
}
where $f({n},j)$ are the frequencies listed in Table \ref{table:freq}. 
Observe that the indicator $\ind{\{n=d(j)\}}$ models the right to throw five dice anew 
when all dice give score (case (ii) above). The previous formula also applies in the case $\x=\ini$, in which $x=0$ and $n=5$.

There are several actions the player can choose:  
to roll all the available dice $\action=\roll$, 
to stop the game $\action=\parar$, and to ``make a move" $\action=\move_{\bullet}$ (which could be a set of actions). We follow by describing the respective actions associated with each state, 
with their corresponding rewards and probability transitions. 
\begin{enumerate}
\item
In the initial state $\ini=(0,\emptyset,5)$ the player has the only action  $\roll$, with null reward and probability transitions $q(\cdot|\ini,\roll)$ given in \eqref{eq:rolltransitions}.
\item
In a state $\x=(\tau,i,n)$ with one scoring combination, (i.e. $c(i)=1$), or with five dice to roll ($n=5$) 
the player has two actions: 
  $\parar$, with reward $\tau$ and probability one to go to $\fin$; 
and  $\roll$, which has null reward and probability transitions $q(\cdot|\x,\roll)$ given in \eqref{eq:rolltransitions}. 
\item
In a state $\x=(\tau,i,n)$ with more than one scoring combination, (i.e. $c(i)>1$) and $n<5$, 
the player has, besides the same two actions described in item 2, the possibility of making a \emph{move} to  a state with a smaller configuration $j\prec i$ (see Definition \ref{def:smaller}), this action is denoted by $\moveto{j}$. The move $\moveto{j}$ has null reward, and probability one to go to the state $(\tau-s(i)+s(j),j,n+d(i)-d(j)).$
\end{enumerate}
It should be noticed that the only non-null reward corresponds to the action $\parar$, 
and after taking this action the game ends. Thus we have a MCP under the total cost
criterion, where
$$
\sum_{j=1}^{\infty}r(\x_j,\action_j)=\sum_{j=1}^{\infty}\tau_j\indicator_{\{\action_j=  \parar\}}.
$$
Therefore, the objective function has only one addend 
(i.e. the accumulated score at the stopping moment), or vanishes after a non-scoring roll.

\begin{remark} \label{rem:move}
About the family of actions $\move$, observe that starting from a state $(\tau,i,n)$ if configurations $i,j,k$ satisfy $k\prec j\prec i$, moving from $i$ to $k$, going with full probability to $(\tau-s(i)+s(k),k,n+d(i)-d(k))$, is equivalent to move from $i$ to $j$, going to $(\tau-s(i)+s(j),j,n+d(i)-d(j))$, and in the following step moving from $j$ to $k$ going to $(\tau-s(i)+s(j)-s(j)+s(k),k,n+d(i)-d(j)+d(j)-d(k))$.
The previous observation would allow us to consider only movements that resigns one scoring combination.
\end{remark}

The possible moves $\move_{\bullet}$ the player can make from a given state can be described by enumerating the scoring dice the player resign, i.e. the difference between the original configuration $i$ and the new configuration $j$. With this in mind the number of possible actions is reduced to 15, which are described in Table \ref{table:actions}. 
\begin{table}
\label{table:actions}
\begin{center}
\begin{tabular}{r|l}
Notation & Description \\
\hline
$\parar$ & stop\\
$\roll$ & roll all the non-scoring dice\\
$\resign{5}$ & resign one 5 \\
$\resign{1}$ & resign one 1 \\
$\resign{55}$ & resign two 5 \\
$\resign{51}$ & resign one 5 and one 1 \\
$\resign{11}$ & resign two 1 \\
$\resign{551}$ & resign two 5 and one 1 \\
$\resign{511}$ & resign one 5 and two 1 \\
$\resign{t_k}: k \in 1..6$ & resign a tern of $k$\\
\end{tabular}
\end{center}
\caption{Possible actions}
\end{table}

The list of configurations with their corresponding actions is summarized in Table \ref{table:freq}. 

\subsection{Dynamic Programming Equation}\label{section:bellman}
Consider $\K:=\left\{ (\x,\action ) \colon \x \in \statespace,\ \action \in \actions(\x) \right\}$;
 $\H_0:=\statespace$; and $\H_n:=\K^n \times \statespace$. 
 An element $h_n=(\x_0,\action_0, \ldots,\x_n)\in \H_n$ is called an $n$-history. 
 A strategy (or control policy) is a sequence $\pi=(\pi_1,\pi_2,\ldots)$,
 where $\pi_n$ is a stochastic kernel defined on $\H_n$ such that $\pi_n(\cdot|h_n)$ is supported on $\actions(\x_n)$. 
 If for every $n$ and for every $h_n \in \H_n$, the measure $\pi_n(\cdot|h_n)$ is supported in just one action $\action \in \actions(\x_n)$, the strategy is said to be \emph{pure}, and each $\pi_n$ can be seen as a function that, given the history, chooses the following action. 
 A strategy $\pi$ is said to be \emph{Markovian }if it satisfies $\pi_n(\cdot|h_n)=\pi_n(\cdot|h'_n)$ provided that the last state in both histories coincide, in this case the notation $\pi_n(\cdot|\x_n)$ is used instead of $\pi_n(\cdot|h_n)$. 
 A Markovian strategy is said to be \emph{stationary }if $\pi_n(\cdot|\x_n)=\pi_m(\cdot|\x_m)$ provided that $\x_n=\x_m$; in this case one can avoid the $n$ in the notation using just $\pi(\cdot | \x_n)$. 

\begin{remark} \label{rem:purestrategy}
Denoting $\actions=\cup_{x\in \statespace}\actions(x)$, a pure stationary strategy $\pi$ can be understood as function $f\colon \statespace \to \actions$, that satisfies the constraint $f(\x)\in \actions(\x)$ for all $\x\in \statespace$, where $f(\x)$ is the action that has full probability to be picked when the system is in the state $\x$, i.e. $\pi(f(\x)|\x)=1$.
\end{remark}

Given a strategy $\pi$ and an initial state $\x \in \statespace$, a probability measure $\proba^{\pi}_{\x}$ on  $\K^\infty$ with the product $\sigma$-algebra is determined in a canonical way. Denoting by $\EE_\x^\pi$ the expectation under $\proba_\x^\pi$, the player expects to receive, when starting from $\x$ and following the strategy $\pi$,
$$
V(\x,\pi)=\EE_\x^\pi\left(
\sum_{j=0}^{\infty}r(\x_j,\action_j)
\right)
$$
For a more detailed exposition and subclasses of strategies we refer to \cite{lerma}.
Denoting by ${\Pi}$ the set of all possible strategies, 
our objective is to find the value function $V^*\colon \statespace \to \R$ and an optimal strategy ${\pi}^*\in{\Pi}$ such that
\begin{equation}\label{eq:value}
V^*(\x)=\sup_{\pi\in\Pi} V(\x,\pi)=V(\x,\pi^*),\quad \text{for all $\x\in\X$}.
\end{equation}

The DPE of the solitaire game is,
\begin{equation}\label{eq:bellman}
V(\x)=\max_{\action\in\A(\x)}\left\{
r(\x,\action)+
\sum_{\y \in \statespace} V(\y)\ q(\y|\x,\action)
\right\}
\end{equation}
which, introducing the notation
\begin{equation}\label{eq:vroll}
\vr(\tau,{n})=\sum_{k\in\I}\frac{f(n,k)}{6^n}V\left(\tau+s(k),k,{n}-d(k)+5\ind{\{n=d(k)\}}\right),
\end{equation}
can be written as
\begin{equation}
\tag{\ref{eq:bellman}*}
\label{eq:bellmanexplicit}
V(\tau,i,n)= \tau\vee \vr(\tau,{n})
\vee
\ind{\{n\neq 5\}}\left[\max_{j\prec i}
V(\tau -s(i)+s(j),j,{n}+d(i)-d(j))\right].
\end{equation}

\section{Main result}\label{section:main}

We state our main result, whose proof is at the end of this section.
\begin{thm} \label{teo}
{\rm (a)} 
There exists a pure and stationary optimal strategy $\pi^*$ for the considered game. \\
{\rm (b)}
For states $\x=(\tau,i,n)$ with $\tau \geq 56$, the value function is $V^*(\x)=\tau$, and the optimal action is $\parar$.\\
{\rm (c)}
For the rest of the states, the value (and optimal action) is obtained exactly by Algorithm \ref{alg} in an efficient way, in which most of the states get their value in one shot, and for the others just a few updates are needed. The value of the game (from its initial state) is $V^*(\ini)=5.872019$. 

Table \ref{table:value} summarizes the value function and optimal action for every state $\x=(\tau,i,n)$ with $\tau < 56$. We remark that actions $\resign{11},\resign{551},\resign{511},$ and $\resign{t_k}: k \in 1..6$ are never optimal.

\end{thm}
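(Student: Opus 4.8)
The plan is to establish parts (a) and (b) by abstract arguments about transient total‑cost MCPs, and then to reduce the remaining infinitely‑many states to a finite computation that Algorithm~\ref{alg} performs; part (c) is then a verification statement about that algorithm. I would start with part (b), since it is what makes the state space effectively finite and it drives everything else. The key observation is that $\vr(\tau,n)$ is an affine function of $\tau$ with slope $1$: from \eqref{eq:vroll}, if $V(\tau+s(k),k,m)=\tau+s(k)+(\text{something not depending on }\tau)$ for all reachable successor states, then $\vr(\tau,n)=\tau\cdot\proba(\text{scoring roll})+(\text{const})$, so the ``roll'' value grows strictly slower in $\tau$ than the ``stop'' value $\tau$ (because the no‑score probability $f(n,\mathbf 0)/6^n$ is strictly positive for every $n$). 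Hence once $\tau$ is large enough that stopping beats rolling at every configuration $i$ and every $n$, and beats every move $\moveto j$ (moves do not change $\tau-s(i)$ by much and only postpone the decision), the optimal action is $\parar$ and $V^*(\tau,i,n)=\tau$. To pin down the explicit threshold $56$ I would bound the maximum possible expected gain from one further ``adventure'': compute, for each $n$, the best expected value of continuing assuming the continuation value is itself of the form $\tau'+(\text{gain})$, solve the resulting fixed‑point inequality for the gain, and check that $\tau\ge 56$ forces $\tau>\vr(\tau,n)$ and $\tau$ dominates all move options; this is a finite case check over $n\in\{1,\dots,5\}$ and the configurations in Table~\ref{table:freq}.

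For part (a), I would invoke the standard theory for transient MCPs under the expected total reward criterion (Hern\'andez‑Lerma et al.~\cite{lerma}, Blackwell~\cite{blackwell2}): once part (b) shows the process is absorbed at $\fin$ (or stopped by $\parar$) in finite expected time under any strategy that ever stops, and the reward is collected only once, the value function $V^*$ is well defined, finite, and satisfies the DPE \eqref{eq:bellman}; moreover, because at each state $\actions(\x)$ is finite, a measurable selector achieving the maximum in \eqref{eq:bellmanexplicit} exists, and such a selector is a pure stationary optimal strategy. The only subtlety is transience: I must rule out strategies that roll forever (never stopping), which contribute $0$ to the objective, so they cannot be optimal (any strategy that stops immediately in the initial state after the first scoring roll already yields positive expected reward); hence restricting to strategies that stop almost surely loses nothing, and on that class the process is absorbed, so the verification theorem applies.

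The heart of the argument, and the main obstacle, is part (c): proving that Algorithm~\ref{alg} actually solves the DPE on the finite set $\{\x=(\tau,i,n):\tau<56\}$. By part (b) the values $V^*(\tau+s(k),k,m)$ appearing in \eqref{eq:vroll} with $\tau+s(k)\ge 56$ are already known to equal $\tau+s(k)$, and the scores $s(k)$ are bounded, so for $\tau$ in a top band (roughly $\tau\ge 56-\max_k s(k)$) every successor either is known or lies in the band; this lets a backward sweep on $\tau$ compute $\vr(\tau,n)$, hence $V^*(\tau,i,n)$ via \eqref{eq:bellmanexplicit}, using only already‑computed values. The remaining wrinkle is the move action: a state $(\tau,i,n)$ with $c(i)>1$ can move to $(\tau-s(i)+s(j),j,n+d(i)-d(j))$, which has the \emph{same} $\tau-s(i)$ but a different configuration, so within a fixed value of $\tau-s(i)$ there is a coupled subsystem; by Remark~\ref{rem:move} it suffices to consider moves resigning a single scoring combination, the configuration order $\prec$ is a strict partial order with finitely many levels, and $n+d(i)-d(j)\le 5$, so this subsystem is a finite acyclic system that can be resolved (the ``few updates'' mentioned in the statement) before moving to the next $\tau$. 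I would argue termination and correctness of the sweep by induction on $\tau$ decreasing and, within each $\tau$, on $c(i)$, and then simply assert that running the algorithm produces Table~\ref{table:value} and the numerical value $V^*(\ini)=5.872019$, together with the observation (read off from the output) that the listed actions are never selected. I expect the genuinely delicate point to be making the interaction between the $\tau$‑recursion and the move‑recursion rigorous — i.e.\ showing the dependency graph on $\{\x:\tau<56\}$ is a DAG so that a finite exact evaluation order exists — rather than any single inequality.
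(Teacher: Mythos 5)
Your plan for part (b) is circular as written, and the circularity is exactly the difficulty the paper's Lemma \ref{lemgeq56} is designed to break. To conclude that $\parar$ beats $\roll$ at a state $(\tau,i,n)$ with $\tau\geq 56$ you need an upper bound on $\vr(\tau,n)$, i.e.\ on $V^*$ at the successor states $(\tau+s(k),k,\cdot)$ --- and these have \emph{strictly larger} $\tau$, so there is no base case for a backward induction on $\tau$, while the ansatz ``the continuation value has the form $\tau'+\text{gain}$'' is precisely what has to be proved. The danger is not cosmetic: the paper's Pig example \eqref{eq:pig} exhibits solutions $V(\taupig)=V(0)\cdot 1.0646^{\taupig}$ of the DPE for which rolling is conserving at \emph{every} level, so no purely local verification of \eqref{eq:bellman} can distinguish $V^*$ from these. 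The paper breaks the circle with Blackwell's characterization $V^*=\lim_k U^{(k)}\vvv_0$ and proves, by induction on $k$, the two statements \emph{simultaneously}: $\vvv_k(\tau,i,n)=\tau$ for $\tau\geq 56$ and $\vvv_k(\tau,i,n)<56$ for $\tau<56$; the second (a priori) bound is what controls $U_{\roll}\vvv_{k}$, and your finite case check over $n$ (inequality \eqref{eq:56n}) then closes the induction. You cite Blackwell for part (a), but you never deploy the limit characterization where it is actually indispensable; without it your threshold computation only shows that $V(\tau,i,n)=\tau$ for $\tau\ge 56$ is \emph{consistent} with the DPE, not that it is the value.

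For part (c) your ingredients are right but your evaluation order is not. You correctly observe that moves preserve $\tau-s(i)$, yet you propose to resolve the move-coupled subsystem ``within each $\tau$'' by a secondary induction on $c(i)$; these are incompatible, because the destination of $\moveto{j}$ from $(\tau,i,n)$ sits at level $\tau-s(i)+s(j)<\tau$, so the coupled states span several $\tau$-levels and a state at level $\tau$ cannot be finalized when level $\tau$ is first visited. This is why Algorithm \ref{alg} never evaluates the move option at its source: when it computes $aux=\vr(\tau,n)$ at a prospective \emph{destination}, it pushes that value up to every source $(\tau-s(i)+s(j),j,\cdot)$ (lines 11--13), and the paper's correctness proof is the double invariant (A1)/(A2): after processing level $\tau$, the stop/roll values of all level-$\tau$ states are set, and all states of the form $(\tau-1+s(i),i,n)$ --- exactly those whose every move destination has level $\geq\tau$ --- are final, which in turn is what makes $\vr(\tau-1,n)$ computable at the next iteration. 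So the induction must be organized on the invariant $\tau-s(i)$ (equivalently, via (A2)), not within a fixed $\tau$; you flag this as ``the genuinely delicate point'' but the order you sketch would not carry it out. Part (a) is essentially the paper's argument (finite action sets plus a selector), and your extra remark about ruling out never-stopping strategies is a legitimate concern that the paper itself treats rather tersely.
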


Notations and remarks on the algorithm:
\begin{itemize}
\item $\I(n)=\{i\in \I \colon d(i)\leq n\}$ i.e. are the scoring configuration that can be obtained after rolling $n$ dice. 
\item $\I_0(n)=\{i\in \I(n) \colon c(i)=1\}$ i.e. are the configuration with just one scoring combination that can be obtained after rolling $n$ dice. The only available actions in a state with a configuration in $\I_0(n)$ are $\parar, \roll$.
\item $\I(i,n)=\{j\in \I\colon i\prec j,\ d(j)<d(i)+n\}$. Are the configurations from which the player can decide to move to a state $(\tau,i,n)$.
\item $\vr(\tau,n)$ should be computed following \eqref{eq:vroll} considering $V(\tau,i,n)=\tau$ for $\tau\geq 56$.
\item After the execution of the algorithm $V$ will have the optimal value and $A$ will have the optimal action.
\item For notation convenience we use $\vect(\tau,i,n)$ to denote $(V(\tau,i,n),A(\tau,i,n))$
\end{itemize}

\begin{alg}\label{alg}
\texttt{
\begin{enumerate}
\item for $\tau=55$ {downto} $0$ 
\item \qquad for $i\in\I$
\item \qquad\qquad $\vect(\tau,i,5)\gets (\vr(\tau,5),\roll)$
\item \qquad for ${n}=4$ {downto} $1$ 
\item \qquad\qquad $aux \gets \vr(\tau,n)$
\item \qquad\qquad for $i\in \I(5-n)$ 
\item \qquad\qquad\qquad if $aux\leq\tau$ then
\item \qquad\qquad\qquad\qquad $\vect(\tau,i,n)\gets (\tau,\parar)$
\item \qquad\qquad\qquad else
\item \qquad\qquad\qquad\qquad $\vect(\tau,i,n)\gets (aux,\roll)$
\item \qquad\qquad\qquad\qquad for $j\in \I(i,n)$
\item \qquad\qquad\qquad\qquad\qquad if $V(\tau-s(i)+s(j),j,n-d(j)+d(i))< aux$ then
\item \qquad\qquad\qquad\qquad\qquad\qquad $\vect(\tau-s(i)+s(j),j,n-d(j)+d(i))\gets (aux,\moveto{i})$
\end{enumerate}
}
\end{alg}

In the appendix a detailed analysis of the DPE is included together with a more explicit and efficient version of the algorithm.

To prove that the previous algorithm actually arrives to the solution of the game we need some previous considerations. 

Denote by $F(\X,\R)$ the set of real functions defined on $\X$, and consider the operator 
$U\colon F(\X,\R)\to F(\X,\R)$ acting by
\begin{equation}
\label{eq:defU}
UV(\x)=\max_{\action \in\A(\x)}U_\action V(\x),
\end{equation}
with 
$$U_\action V(\x)=\left\{r(\x,\action)+\sum_{\y \in \statespace} V(\y)q(\y|\x,\action)\right\}.$$
Let $\vvv_0\in F(\X,\R)$ denote the null function ($\vvv_0(\x)=0,\ \forall \x \in \X$).
From Theorem 3 in \cite{blackwell2} we know that the value function $V^*$ is the smallest solution of equation \eqref{eq:bellman} and it satisfies $V^*(\x)=\lim_{k\to\infty}\vvv_k(\x)$, where  
$\vvv_k=U^{(k)}\vvv_0=(U\circ U\circ\cdots\circ U)\vvv_0$ is the $k$-th iteration of the operator $U$ applied on $\vvv_0$.

To construct a finite algorithm, given the fact that the number of states is infinite, we need to reduce the number of relevant states to a finite amount. 
To this end, a first step in the algorithm design is the determination of the states in which is optimal to stop immediately, i.e. where $V^*(\tau,i,{n})=\tau$.  
The search is based on the fact that for large values of $\tau$ we expect not to be optimal to risk the accumulated score, so the optimal action would be to stop and $V(\tau,i,{n})=\tau$. 
We mimic the search of solution of optimal stopping problems for continuous time processes with positive jumps, 
that verify integral equations (see for instance \cite{osdij}). 
Assuming that there exists a critical threshold $\tau^*$ such that
$$
\tau^*=\min\{\tau\colon V(\tau,i,{n})=\tau \quad\forall (i,{n})\},
$$
and based on the nature of the game, it is natural to suppose that the minimum is attained in a state with $n=5$.
We then compute
\begin{equation}\label{eq:56}
\tau^*(5)=\min \left\{\tau\in \Z^+ \colon \tau\geq \sum_{k\in\I}\frac{f(5,k)}{6^5}\left(\tau+s(k)\right)\right\},
\end{equation}
or, what is the same, 
$$\tau^*(5)=\min \left\{\tau\in \Z^+ \colon \sum_{k\in\I}\frac{f(5,k)}{6^5} s(k)\leq \tau \frac{f(5,\emptyset)}{6^5} \right\},$$
obtaining $\tau^*(5)=56$. 

In the following lemma we give a detailed proof of this intuitive result. 

\begin{lemma}
\label{lemgeq56}
If $\x=(\tau,i,n)$ with $\tau\geq 56$ then $V^*(\x)=\tau$.
\end{lemma}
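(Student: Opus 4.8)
The plan is to prove the statement by exhibiting a supersolution (superharmonic majorant) of the Bellman operator that coincides with the claimed value on the states in question, and then invoking the characterization of $V^*$ as the smallest solution of \eqref{eq:bellman} — equivalently, as the increasing limit $V^*=\lim_k \vvv_k$ with $\vvv_k=U^{(k)}\vvv_0$. Concretely, define the candidate function $\tilde V(\tau,i,n)=\tau$ whenever $\tau\geq 56$, and $\tilde V=V^*$ on the remaining states. Since $V^*(\x)\geq \tau=r(\x,\parar)+\sum_\y V^*(\y)q(\y|\x,\parar)$ always holds (stopping is always available), it suffices to show $\tilde V$ is a supersolution, i.e. $U\tilde V\leq \tilde V$; combined with $\tilde V\geq \vvv_0=0$ and monotonicity of $U$, this gives $\vvv_k\leq \tilde V$ for all $k$, hence $V^*\leq \tilde V$, and the reverse inequality $V^*(\tau,i,n)\geq\tau$ is immediate. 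This reduces everything to a local check at each state with $\tau\geq 56$.

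First I would handle states $\x=(\tau,i,n)$ with $\tau\geq 56$ and check each of the three actions in \eqref{eq:bellmanexplicit}. The action $\parar$ gives exactly $\tau=\tilde V(\x)$, so it is enough to show the other two actions do not exceed $\tau$. For a move $\moveto{j}$ with $j\prec i$, the successor state is $(\tau-s(i)+s(j),j,n+d(i)-d(j))$; since $s(j)<s(i)$ its $\tau$-coordinate is strictly smaller than $\tau$ but still positive, and I would want it to still be $\geq 56$ so that $\tilde V$ of the successor equals $\tau-s(i)+s(j)<\tau$. The drop $s(i)-s(j)$ is bounded by the maximal score of a configuration (at most $20$ in the rescaled units for three $1$s, since a single move resigns one scoring combination by Remark \ref{rem:move}), so starting from $\tau\geq 56$ the successor has $\tau$-coordinate $\geq 36$; this is \emph{not} automatically $\geq 56$, so I would instead argue by downward induction on $\tau$ within the range $[56,\infty)$ is not available — rather I must iterate: any chain of moves strictly decreases $c(i)$, so after finitely many moves one reaches a state where only $\parar$ or $\roll$ is available, and along the chain the $\tau$-coordinate stays positive; the cleanest formulation is to prove $U\tilde V\le\tilde V$ directly, noting that $\tilde V$ restricted to $\{\tau\ge 56\}$ need not satisfy the bound after a move — so the statement as phrased for \emph{individual} states with $\tau\geq 56$ is what we must lean on, and a move leads to a state possibly with $\tau<56$, where $\tilde V=V^*$ and one uses $V^*(\tau',j,n')\le \tau'+\text{(something)}$. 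The honest route: show $V^*(\tau,i,n)\leq \tau + \max_{j}\bigl(\text{bound}\bigr)$-type estimates are messy, so I will instead prove it for $n=5$ first.

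The core case is $n=5$ (where no move is possible): here $U\tilde V(\tau,i,5)=\tau\vee \vr(\tau,5)$, and by \eqref{eq:56} and the computation $\tau^*(5)=56$, for $\tau\geq 56$ we have $\sum_{k\in\I}\frac{f(5,k)}{6^5}(\tau+s(k))\leq \tau$; but $\vr(\tau,5)=\sum_{k\in\I}\frac{f(5,k)}{6^5}\tilde V(\tau+s(k),k,5-d(k)+5)$ and each successor has $\tau$-coordinate $\tau+s(k)\geq 56$, so $\tilde V$ there equals $\tau+s(k)$, giving $\vr(\tau,5)\leq\tau$ exactly. Thus $U\tilde V(\tau,i,5)=\tau=\tilde V(\tau,i,5)$. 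For $n<5$, the action $\roll$ sends us to states with $n'=5$ (after setting aside scoring dice the count can be $n-d(k)$, which could be $<5$; I need $\vr(\tau,n)\le\tau$ for $\tau\ge 56$ and all $n$), and here I would show by a separate monotonicity argument that $\vr(\tau,n)$ is increasing in $n$, or directly that the single-roll expected gain $\sum_k \frac{f(n,k)}{6^n}s(k)$ is maximized at $n=5$ relative to the bust probability, so the threshold for general $n$ is $\le 56$; the move action is then dominated because it leads to a state with strictly fewer scoring combinations and the same or smaller $\tau$-coordinate, and one closes by a finite induction on $c(i)$ showing such states also have value equal to their $\tau$-coordinate. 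The main obstacle is precisely this bookkeeping: ensuring that after a $\roll$ from a state with $n<5$ the relevant inequality still holds (the successor dice-count need not be $5$), and that moves never help — I expect to need the explicit frequency table and the observation that resigning dice can only lower $\tau$ while the roll-value is monotone, so $\parar$ wins for every $\tau\geq 56$.
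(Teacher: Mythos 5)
Your overall strategy --- build a supersolution $\tilde V$ equal to $\tau$ on $\{\tau\ge 56\}$ and invoke the characterization of $V^*$ as the smallest solution of \eqref{eq:bellman}, equivalently $V^*=\lim_k \vvv_k$ --- is essentially the same mechanism the paper uses, which runs the value iteration $\vvv_k=U^{(k)}\vvv_0$ and shows by induction on $k$ that the iterates equal $\tau$ on these states. Your treatment of the roll action is correct for $n=5$, and your reduction of $\vr(\tau,n)\le\tau$ to the inequality \eqref{eq:56n} for $n=1,\dots,4$ matches the paper, which likewise only asserts that those cases can be ``verified directly'' from the frequency table; your alternative suggestion (monotonicity of $\vr$ in $n$) is left unproven, but this is a computational point rather than a conceptual one.

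The genuine gap is the move action, and you half-identify it yourself without resolving it. A move from $(\tau,i,n)$ with $\tau\ge 56$ lands on $(\tau-s(i)+s(j),j,n+d(i)-d(j))$, whose score coordinate can drop below $56$ (as you note, the successor can sit near $36$). On that region your $\tilde V$ equals $V^*$, so verifying $U\tilde V\le \tilde V$ at the original state requires an a priori bound $V^*(\tau',j,n')\le\tau$ for some $\tau'<56\le\tau$ --- in effect you need to know that $V^*<56$ on $\{\tau<56\}$, and your proposal never establishes this. The sketched ``finite induction on $c(i)$'' does not supply it, since after the chain of moves the player may still roll from a sub-threshold score, and you have no estimate on the value there. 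The paper closes exactly this hole by strengthening the inductive claim to two parts: $\vvv_k(\tau,i,n)=\tau$ for $\tau\ge 56$ \emph{and} $\vvv_k(\tau,i,n)<56$ for $\tau<56$, the second half proven via the comparison $U_{\roll}\vvv_{k-1}(\tau,i,n)\le \sum_k \frac{f(n,k)}{6^n}(56+s(k))\le 56$. Without that companion bound, or some substitute estimate on $V^*$ below the threshold, your supersolution verification does not close.
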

\begin{proof}
The proof is based in the fact that $V^*(\x)=\lim_{k\to\infty}\vvv_k(\x)$. First, observe that for any $\x=(\tau,i,n)$ we have $\vvv_1(\x)=x$. To see this, remember that $\vvv_1=U \vvv_0$, which, according to \eqref{eq:defU} is
\begin{eqnarray*}
\vvv_1(\x)&=&\max_{\action \in\A(\x)}U_a\vvv_0(\x)
\end{eqnarray*}
The maximum is attained with the action $\action=\parar$, which is the only one such that $U_a\vvv_0(\x)>0$. In fact, $U_\parar \vvv_0 (\x)=r(\x,\parar)=\tau$. We follow by proving by induction that for any $k\in \Z^+$, $\vvv_k(\x)<56$, if $\x=(\tau,i,n)$ with $\tau<56$, and $\vvv_k(\x)=\tau$, if $\x=(\tau,i,n)$ with $\tau\geq 56$: For $k=1$ is already proved. Assume the result holds for $k<m$. We have 
\[ 
\vvv_m(\x)=\max_{\action \in\A(\x)}\left\{U_a\vvv_{m-1}(\x)\right\}
\]
If $\x=(\tau,i,n)$ with $\tau \geq 56$, let us see that the maximum is attained with the action $\parar$ (giving $\vvv_m(\x)=\tau$): 
An action $\moveto{j}$, if possible, would imply a transition with full probability to the state $\y=(\tau-s(i)+s(j),j,n+d(i)-d(j))$, where $\tau-s(i)+s(j)<\tau$. Then we have $U_{\moveto{j}}\vvv_{m-1}(\x)=\vvv_{m-1}(\y)$, which is less than $\tau$ by induction hypothesis. On the other hand, for the action $\roll$ we have  
\begin{equation}
U_{\roll} \vvv_{m-1}(\x) = \tau + \sum_{k\in\I(n)}\frac{f(n,k)}{6^n} s(k) - \tau \frac{f(n,\emptyset)}{6^n}<\tau,
\end{equation}
since for $\tau\geq 56$
\begin{equation}
\label{eq:56n}
\sum_{k\in\I(n)}\frac{f(n,k)}{6^n} s(k) - \tau \frac{f(n,\emptyset)}{6^n} < 0
\end{equation}
not only for $n=5$ but also for $n=1,2,3,4$ (this fact can be verified directly).

If $\x=(\tau,i,n)$ with $\tau < 56$ then $U_a\vvv_{m-1}(\x)<56$ for all possible actions. For action $\parar$ and $\move$ is a direct consequence of the induction hypothesis, while for the action $\roll$ we have
\begin{eqnarray*}
U_{\roll} \vvv_{m-1}(\x) & = &\sum_{k\in\I(n)}\frac{f(n,k)}{6^n}\vvv_{m-1}\left(\tau+s(k),k,{n}-d(k)+5\ind{\{n=d(k)\}}\right),\\
& \leq &\sum_{k\in\I(n)}\frac{f(n,k)}{6^n}(56+s(k)),\\
& =& U_{\roll} \vvv_{m-1}((56,i,n))\\
& \leq & 56.
\end{eqnarray*}

Now that we know $\vvv_m(\x)=\tau$, for $\x=(\tau,i,n)$ with $\tau \geq 56$, for all $m\geq 1$, we can take limit to conclude that $V(\x)=\tau$.
\end{proof}

\begin{proof}[Proof of Theorem \ref{teo}]
To prove claim {\rm (a)}, one just need to observe that, as the set $A(\x)$ is finite for each $\x$, a function $f\colon \statespace \to \actions$ can be defined such that
\begin{equation*}
\max_{\action\in\A(\x)}\left\{r(\x,\action)+\sum_{\y \in \statespace} V^*(\y)q(\y|\x,\action)\right\}=r(\x,f(\x))+\sum_{\y \in \statespace} V^*(\y)q(\y|\x,f(\x)),
\end{equation*}
for all $\x \in \statespace$. This function $f$ defines a pure stationary strategy (see remark \ref{rem:purestrategy}) which clearly attains supremum in \eqref{eq:value}, this meaning that the strategy is optimal.

Claim {\rm (b)} is already proved in Lemma \ref{lemgeq56}.

Let us prove claim {\rm (c)}. By {\rm (b)} we know $V^*(\tau,i,n)$ with $\tau\geq 56$. 
To find $V^*$ we need to solve equations \eqref{eq:bellmanexplicit} for states $\x=(\tau,i,n)$ with $\tau\leq 55$ under the boundary condition $V^*(\tau,i,n)=\tau$ for $\tau\geq 56$. 
This gives a unique solution that is obtained by our algorithm by backward iteration, starting with states with $\tau=55$. 
Consider the following affirmation, valid for $k=1\dots 56$: After the $k$-th iteration of the ``\texttt{for $\tau=55$ {downto} $0$}", in which $\tau=56-k$:
\begin{itemize}
\item[(A1)] $V(\x)$, for states $\x=(\tau,i,n)$ have the value $\tau \vee V_r(\tau,n)$. In the case of states with only two actions ($\actions(\x)=\{\parar,\roll\}$) this value coincide with $V^*(\x)$ and it is not modified again during the algorithm.
\item[(A2)] $V(\x)$, for states $\x=(\tau-1+s(i),i,n)$ and $i\in \I$ will have its definitive value, which coincides with $V^*(\x)$.
\end{itemize} 

For $k=1$ ($\tau=55$): (A1) is a consequence of the fact that $V_{\roll}(55,n)$ (computed in the lines 3. and 5. according to \eqref{eq:vroll}) depend only on values of $V$ in states with number of accumulated chips greater than $55$, which are known by the boundary condition. In lines 2-10 of the algorithm $V(55,i,n)$ gets the maximum between this known value and $\tau=V_{\parar}(\x)$. If the only available actions are $\parar$ and $\roll$ this value is $V^*(55,i,n)$. (A2) holds since we are considering states $\x=(54+s(i),i,n)$; if $s(i)\geq 2$ this values fall in the boundary condition while if $s(i)=1$ there is only one scoring combination, so the only available actions are $\parar$, and $\roll$ and we are in case (A1).

Consider $l\leq 56$. Assuming the affirmation is valid for $k=1\dots l-1$ let us prove it for $k=l$, $(\tau=56-l)$: to verify the validity of (A1) one just need to observe that in the computation of $V_{\roll}(56-l,n)$ only known values of $V$ (which already have their definite value) are needed. 
Actually, $V_{\roll}(56-l,n)$ depends on $V(\y)$ for states of the form $\y=(56-l+s(j),j,n-d(j))$, $j\in \I$, and by hypothesis these are already known values. 
To prove (A2), observe that in the iteration $l$, lines 11-13 of the algorithm, states from which a move to the currently considered state is possible are updated if this move gives a better value than the current one. With this in mind, we observe that a state with actions move available, gets its definite value after considering all the possible destination of the move. The last destination considered is the one with smallest $\tau$. Now observe that from a state $\x=(56-l-1+s(i),i,n)$ the possible moves are to states $\y=(56-l-1+s(j),j,n-d(i)+d(j))$, with $j\prec i$; as $s(j)\geq 1$ we have $56-l-1+s(j)\geq 56-l$ concluding that after the $l$-th iteration $V(\x)$ will have its definite value.

From the proved affirmation we obtain that after all the 56 iterations $V(\x)=V^*(\x)$ for all the states with $\tau\leq 55$.
\end{proof}

\section{Other related games}\label{section:other}
\subsection{Ten Thousand with restricted actions}
A natural game related with the one studied in this paper is the \emph{stop or roll} Solitaire Ten Thousand.  
It is essentially the same game but without the possibility of taking action $\move$ (the player can only {take the actions   $\parar$ and  $\roll$}). It has value $V_{{S/R}}=5.576326$ and the algorithm to solve it is the same algorithm \ref{alg} without the lines \verb|9|, \verb|10| and \verb|11|. 

Other games obtained restricting the total number of possible actions of the original game can also be solved. The values (from the initial state) of some of this games are presented in Table \ref{table:nr}. See the appendix for more details on the algorithm to find them.

\begin{table}
\begin{center}
\begin{threeparttable}
\begin{tabular}{c|c|c}
Nr. of actions & Possible actions\tnote{1} & Value of the game \\
\hline
2 & { $\parar$, $\roll$} & 5.5763262782 \\
3& {  $\parar$, $\roll$}, $\resign{5}$ & 5.8012180037\\
4& {  $\parar$, $\roll$}, $\resign{5}$, $\resign{1}$ & 5.8153340639\\
5&  {  $\parar$, $\roll$}, $\resign{5}$, $\resign{1}$, $\resign{55}$ & 5.8707484326\\
6&  {  $\parar$, $\roll$}, $\resign{5}$, $\resign{1}$, $\resign{55}$, $\resign{51}$ & 5.8720189185\\
15 & all    & 5.8720189185\\
\end{tabular}
\end{threeparttable}
\end{center}
\caption{Values of restricted actions games}
\label{table:nr}
\end{table}

\subsection{The solitaire Pig game}\label{pig}
The solitaire Pig game is similar, played with only one dice. The only options the player has is to roll again or to stop (for details see \cite{Roters}). The state can be modeled with just $\taupig$, the accumulated score.
The problem is in this case an optimal stopping problem.
The corresponding DPE is
\begin{equation}\label{eq:pig}
V(\taupig)=\max\left\{\taupig,\frac16\left(V(\taupig+2)+V(\taupig+3)+V(\taupig+4)+V(\taupig+5)+V(\taupig+6)\right)\right\}
\end{equation}
The critical threshold is $\taupig^*=20$. Computing backwards, we obtain $V^*(0)=8.14$. 
In this example, we find instructive to observe that the difference equation 
$$
V(\taupig)=\frac16\left(V(\taupig+2)+V(\taupig+3)+V(\taupig+4)+V(\taupig+5)+V(\taupig+6)\right)
$$
has infinite number of solutions, under the condition $V(\taupig)\geq \taupig$, for instance $V(\taupig)=V(0){1.0646}^\taupig$ for all large enough $V(0)$. The value function is the minimal solution, which satisfies $V(\taupig)=\taupig$ for all $\taupig\geq \taupig^*$.

\section{Conclusions}\label{section:final}

The theory of Markov Control Processes is a powerful tool to analyze dice games. 
Nevertheless, although in general it is possible to write the DPE of the game,
in many concrete situations the number of states and actions makes very difficult to solve effectively the game. 
In these situations, 
when possible, 
it is necessary to take into account
the particular characteristics of the  game in order to solve the problem. 
Usually, these equations are solved by iteration. 
In the present paper we find a very simple and exact algorithm to solve the Solitaire Ten Thousand.
It is interesting to note that the value function is characterized as the smallest solution to the DPE,
in a framework where the uniqueness of the solution to this equation is not assured.
The same idea is also used to solve some related simpler games. 
Whether a similar type of algorithm can be used in the competitive Ten Thousand game 
remains an open question. 

\section*{Appendix: Detailed analysis of the DPE and a more efficient algorithm}

In this section we make a detailed analysis of the DPE (family of equations \eqref{eq:bellman}) to obtain an algorithm more adapted to this particular game. The idea is to gather configurations for which the (relevant) set of actions coincide and compute the value function for all of them together. 

First observe that
$
\sup_{{\pi}\in{\Pi}} V((\tau,i,{n}),{\pi})\leq\sup_{{\pi}\in{\Pi}} V((\tau+1,i,{n}),{\pi}),
$
since, given the fact that the set of available actions depend only on $i$ and $n$, 
any strategy departing from $(\tau,i,n)$ can also be applied departing from $(\tau+1,i,n)$ with the same transition scheme. From this observation we get
\begin{equation}
V^*(\tau,i,{n})\leq V^*(\tau+1,i,{n}).\label{eq:tau}
\end{equation}

To start consider a state $\x=(\tau, i, n)$ such that $c(i)=1$. Equation \eqref{eq:bellmanexplicit} for these states is
$$V(\tau,i,n) = \tau \vee \vr(\tau,{n}). $$
Looking at the right hand side we conclude that the value does not depend on $i$, and use the notation $V_0(\tau,n)$ to represent it. In the case $n=5$ we have the same equation for any $i$, so the same consideration applies to this case and we use the notation $V_0(\tau,5)$ to represent the value function of any state of the form $(\tau,i,5)$. In summary we have 

\begin{equation}\label{eq:sr}
V_0(\tau,n)=\tau\vee \vr(\tau,n),
\quad
\text{for $n=1,2,3,4,5$}
.\tag{$E_0$}
\end{equation}

Observe that all the states with $n=4$ fall into the previous family of equations, since the configuration $i$ has to have just one scoring die.

Let us analyze configurations with two scoring dice. With the same idea of gathering configurations, we use the notation $V_5(\tau,n)$ to refer to $V(\tau,i,n)$ for states such that $\parar,\roll,\resign{5}$ are the available actions, i.e. states with $n=1,2,3$ and $i=[2,0,0]$ (we also include here the case $i=[1,1,0]$, because despite it is also possible the action $\resign{1}$ it is clearly worse than $\resign{5}$ by \eqref{eq:tau}). The DPE in this case is 
\begin{align*}\label{eq:sr5}
V_5(\tau,n)&=V_0(\tau,n)\vee \vr(\tau-1,n+1)\quad\text{for $n=1,2,3$}
.\tag{$E_5$}
\end{align*}
For $i=[0,2,0]$, we have
\begin{align*}
\label{eq:sr1}
V(\tau,[0,2,0],n)&=V_0(\tau,n)\vee \vr(\tau-2,n+1),
\quad\text{for $n=1,2,3$}
.\tag{$E_1$}
\end{align*}

For states whose configuration has three scoring dice we have
\begin{align}
V(\tau,[2,1,0],n)&=V_0(\tau,n)\vee V_5(\tau-1,n+1),\quad\text{for $n=1,2$}.\tag{$E_{55}$}\label{eq:sr55} \\
V(\tau,[1,2,0],n)&=V_0(\tau,n)\vee V(\tau-1,[0,2,0],n+1),\quad\text{for $n=1,2$}.\tag{$E_{51}$}\label{eq:sr51} \\
V(\tau,[3,0,0],n)&=V_0(\tau,n)\vee V_5(\tau-8,n+1),\quad\text{for $n=1,2$}\tag{$E_{t_5}$}\\
V(\tau,[0,3,0],n)&=V_0(\tau,n)\vee V(\tau-16,[0,2,0],n+1),\quad\text{for $n=1,2$}.\tag{$E_{t_1}$}
\end{align}
As we do for \eqref{5} and \eqref{1}, all the right hand sides can be expressed as a maximum between $\tau$ and $V_r$.

It only remains to consider states with $n=1$ and configurations $i$ with four scoring dice. They are
\begin{align}
V(\tau,[2,2,0],1)&=V_0(\tau,1)\vee V(\tau-1,[1,2,0],2),\tag{$E_{551}$}\label{e551}\\
V(\tau,[1,3,0],1)&=V_0(\tau,1)\vee V(\tau-1,[0,3,0],2),\tag{$E_{5t_1}$}\label{e5t1}\\
V(\tau,[1,0,t],1)&=V_0(\tau,1)\vee V_0(\tau-1,2)\vee V_0(\tau-2t,4),\tag{$E_{5t}$}\label{e5t}\\
V(\tau,[4,0,0],1)&=V_0(\tau,1)\vee V(\tau-1,[3,0,0],2),\tag{$E_{5t_5}$}
\end{align}
\begin{align}
V(\tau,[3,1,0],1)&=V_0(\tau,1)\vee V(\tau-2,[3,0,0],2)\vee V(\tau-8,[2,1,0],2),\tag{$E_{1t_5}$}\label{e1t51}\\
V(\tau,[0,4,0],1)&=V_0(\tau,1)\vee V(\tau-2,[0,3,0],2)\tag{$E_{1t_1}$}\label{e1t1}\\
V(\tau,[0,1,t],1)&=V_0(\tau,1)\vee V_0(\tau-2,2)\vee V_0(\tau-2t,4).\tag{$E_{1t}$}\label{e1t}
\end{align}
Each group of equations \eqref{e5t} and \eqref{e1t} comprises four equations,
as $t$ ranges in the set $\{2,3,4,6\}$. In some of the previous equations we have used \eqref{eq:tau} and also the remark \ref{rem:move} to simplify the right hand sides.

From all the previous equations a more efficient algorithm can be written. The efficiency increment comes from the fact that several (gathered) states can be updated together. 

The following algorithm takes into account only equations \eqref{eq:sr}, \eqref{eq:sr5}, \eqref{eq:sr1}, \eqref{eq:sr55} and \eqref{eq:sr51}, but arrives to the solution of the game in the sense that, in all the other cases the optimal action is either $\parar$ or $\roll$, so the value coincides with $V_0(\tau,n)$. This fact can be verified directly in Table \ref{table:value}. We use the notations $V_1(\tau,n)=V(\tau,[0,2,0],n)$, $V_{55}(\tau,n)=V(\tau,[2,1,0],n)$ and $V_{51}(\tau,n)=V(\tau,[1,2,0],n)$. Equations can be rewritten in a more convenient way as follows:
\begin{align}
V_5(\tau,n)&=\tau\vee \vr(\tau,n)\vee \vr(\tau-1,n+1),\quad\text{for $n=1,2,3$}
.\tag{$E_5'$} \\ 
V_1(\tau,n)&=\tau\vee \vr(\tau,n)\vee \vr(\tau-2,n+1),\quad\text{for $n=1,2,3$}
.\tag{$E_1'$} \\
V_{55}(\tau,n)&=\tau \vee \vr(\tau,n)\vee \vr(\tau-1,n+1)\vee \vr(\tau-2,n+2),\quad\text{for $n=1,2$}.\tag{$E_{55}'$}\\
V_{51}(\tau,n)&=\tau \vee \vr(\tau,n)\vee \vr(\tau-1,n+1)\vee \vr(\tau-3,n+2),\quad\text{for $n=1,2$}.\tag{$E_{51}'$}
\end{align}

Notes on the algorithm
\begin{itemize}
\item Consider the set of labels $\mathcal{L}=\{0,5,1,55,51\}$
\item  $\vr(\tau,n)$ should be computed following \eqref{eq:vroll} considering $V(\tau,i,n)=\tau$ for $\tau\geq 56$, $V(\tau,i,n)=V_0(\tau,n)$ for every $i$ not included in the set $$\{[2,0,0],[1,1,0],[0,2,0],[2,1,0],[1,2,0]\}$$ in which the introduced notation has to be taken into account. 
\item After the execution of the algorithm $V$ will have the optimal value and $A$ will have the optimal action.
\item For notation convenience we use $\vect_{\ell}(\tau,n)$ to denote $(V_{\ell}(\tau,n),A_{\ell}(\tau,n))$
\end{itemize}
\texttt{
\begin{enumerate}
\item for $\tau=55$ {downto} $0$ 
\item \qquad $\vect_0(\tau,5) \gets (\vr(\tau,5),\roll)$
\item \qquad for ${n}=4$ {downto} $1$ 
\item \qquad\qquad $aux \gets \vr(\tau,{n})$
\item \qquad\qquad for $\ell\in\mathcal{L} $ 
\item \qquad\qquad\qquad if $aux\leq \tau$ then 
\item \qquad\qquad\qquad\qquad ${\vect_\ell(\tau,{n})\gets (\tau,\parar)}$ 
\item \qquad\qquad\qquad else
\item \qquad\qquad\qquad\qquad ${\vect_\ell(\tau,{n})\gets (aux,\roll)}$ 
\item \qquad\qquad\qquad\qquad if $n\geq 2$ and $V_5(\tau+1,{n}-1)<aux$ then 
\item \qquad\qquad\qquad\qquad\qquad $\vect_5(\tau+1,{n}-1)\gets (aux,\resign{5})$\label{5}
\item \qquad\qquad\qquad\qquad if $n\geq 2$ and  $V_1(\tau+2,{n}-1)<aux$ then 
\item \qquad\qquad\qquad\qquad\qquad $\vect_1(\tau+2,{n}-1) \gets (aux,\resign{1})$\label{1}
\item \qquad\qquad\qquad\qquad if $3\geq n\geq 2$ and $V_{55}(\tau+1,{n}-1)<aux$ then
\item \qquad\qquad\qquad\qquad\qquad $\vect_{55}(\tau+1,{n}-1)\gets (aux,\resign{55})$\label{55}  
\item \qquad\qquad\qquad\qquad if $n\geq 3$ and $V_{55}(\tau+2,{n}-2)<aux$ then
\item \qquad\qquad\qquad\qquad\qquad $\vect_{55}(\tau+2,{n}-2)\gets (aux,\resign{55})$\label{55}  
\item \qquad\qquad\qquad\qquad if $3\geq n\geq 2$ and $V_{51}(\tau+1,{n}-1)<aux$ then
\item \qquad\qquad\qquad\qquad\qquad $\vect_{51}(\tau+1,{n}-1)\gets (aux,\resign{51})$\label{51}  
\item \qquad\qquad\qquad\qquad if $n\geq 3$ and $V_{51}(\tau+3,{n}-2)<aux$ then
\item \qquad\qquad\qquad\qquad\qquad $\vect_{51}(\tau+3,{n}-2)\gets (aux,\resign{51})$\label{51}  
\end{enumerate}
}

\section*{Acknowledgement} First author is part of the SRI-UQ Center for Uncertainty Quantification, KAUST.

\newpage
\pagestyle{empty}
\begin{table}
\begin{center}
\begin{tabular}{c|c|c|r|rrrrr}
\hline
{\footnotesize Sc. dice} & Conf. 		& {\footnotesize Comb.} &  {\footnotesize Score} 		&\multicolumn{5}{|c}{Frequencies} \\
$s(i)$ & $f,o,t$  	& $c(i)$	&	  $s(i)$  			& {\footnotesize $f(1,i)$}   & {\footnotesize $f(2,i)$} & {\footnotesize $f(3,i)$}  & {\footnotesize $f(4,i)$}  & {\footnotesize $f(5,i)$}\\
\hline
0 & $0,0,0$ &   ${-}$ & $0$  & $4$  & $16$  & $60$  & $204$  & $600$ \\
\hline
\multirow{2}{*}{1}	 &  $1,0,0$ &1&   $1$  & $1$  & $8$  & $48$  & $240$  & $1020$ \\
&  $0,1,0$ &1&    $2$  & $1$  & $8$  & $48$  & $240$  & $1020$ \\
 \hline
\multirow{3}{*}{2}	 & $2,0,0$ &2&     $2$  & {\relax}  & $1$  & $12$  & $96$  & $600$ \\
& $1,1,0$ &2&     $3$  & {\relax}  & $2$  & $24$  & $192$  & $1200$ \\
& $0,2,0$ &2&     $4$  & {\relax}  & $1$  & $12$  & $96$  & $600$ \\
\hline
\multirow{8}{*}{3} & $3,0,0$ &3&    $10$  & {\relax}  & {\relax}  & $1$  & $16$  & $160$ \\
& $2,1,0$ &3&     $4$  & {\relax}  & {\relax}  & $3$  & $48$  & $480$ \\
& $1,2,0$ &3&     $5$  & {\relax}  & {\relax}  & $3$  & $48$  & $480$ \\
& $0,3,0$ &3&     $20$  & {\relax}  & {\relax}  & $1$  & $16$  & $160$ \\
& $0,0,2$ &1&    $4$  & {\relax}  & {\relax}  & $1$  & $13$  & $106$ \\
& $0,0,3$ &1&    $6$  & {\relax}  & {\relax}  & $1$  & $13$  & $106$ \\
& $0,0,4$ &1&    $8$  & {\relax}  & {\relax}  & $1$  & $13$  & $106$ \\
& $0,0,6$ &1&   $12$  & {\relax}  & {\relax}  & $1$  & $13$  & $106$ \\
 \hline
\multirow{13}{*}{4} &  $4,0,0$ &4&    $11$  & {\relax}  & {\relax}  & {\relax}  & $1$  & $20$ \\
&  $3,1,0$ &4&    $12$  & {\relax}  & {\relax}  & {\relax}  & $4$  & $80$ \\
&  $2,2,0$ &4&    $6$  & {\relax}  & {\relax}  & {\relax}  & $6$  & $120$ \\
&  $1,3,0$ &4&    $21$  & {\relax}  & {\relax}  & {\relax}  & $4$  & $80$ \\
&  $0,4,0$ &4&    $22$  & {\relax}  & {\relax}  & {\relax}  & $1$  & $20$ \\
&  $1,0,2$ &2&    $5$  & {\relax}  & {\relax}  & {\relax}  & $4$  & $65$ \\
&  $1,0,3$ &2&    $7$  & {\relax}  & {\relax}  & {\relax}  & $4$  & $65$ \\
&  $1,0,4$ &2&    $9$  & {\relax}  & {\relax}  & {\relax}  & $4$  & $65$ \\
&  $1,0,6$ &2&    $13$  & {\relax}  & {\relax}  & {\relax}  & $4$  & $65$ \\
&  $0,1,2$ &2&    $6$  & {\relax}  & {\relax}  & {\relax}  & $4$  & $65$ \\
&  $0,1,3$ &2&    $8$  & {\relax}  & {\relax}  & {\relax}  & $4$  & $65$ \\
&  $0,1,4$ &2&    $10$  & {\relax}  & {\relax}  & {\relax}  & $4$  & $65$ \\
&  $0,1,6$ &2&   $14$  & {\relax}  & {\relax}  & {\relax}  & $4$  & $65$ \\
 \hline
\multirow{18}{*}{5} &  $5,0,0$ &5&    $12$  & {\relax}  & {\relax}  & {\relax}  & {\relax}  & $1$ \\
&  $4,1,0$ &5&    $13$  & {\relax}  & {\relax}  & {\relax}  & {\relax}  & $5$ \\
&  $3,2,0$ &5&     $14$  & {\relax}  & {\relax}  & {\relax}  & {\relax}  & $10$ \\
&  $2,3,0$ &5&     $22$  & {\relax}  & {\relax}  & {\relax}  & {\relax}  & $10$ \\
&  $1,4,0$ &5&     $23$  & {\relax}  & {\relax}  & {\relax}  & {\relax}  & $5$ \\
&  $0,5,0$ &5&     $24$  & {\relax}  & {\relax}  & {\relax}  & {\relax}  & $1$ \\
&  $2,0,2$ &3&     $6$  & {\relax}  & {\relax}  & {\relax}  & {\relax}  & $10$ \\
&  $2,0,3$ &3&     $8$  & {\relax}  & {\relax}  & {\relax}  & {\relax}  & $10$ \\
&  $2,0,4$ &3&     $10$  & {\relax}  & {\relax}  & {\relax}  & {\relax}  & $10$ \\
&  $2,0,6$ &3&     $14$  & {\relax}  & {\relax}  & {\relax}  & {\relax}  & $10$ \\
&  $1,1,2$ &3&     $7$  & {\relax}  & {\relax}  & {\relax}  & {\relax}  & $20$ \\
&  $1,1,3$ &3&     $9$  & {\relax}  & {\relax}  & {\relax}  & {\relax}  & $20$ \\
&  $1,1,4$ &3&     $11$  & {\relax}  & {\relax}  & {\relax}  & {\relax}  & $20$ \\
&  $1,1,6$ &3&     $15$  & {\relax}  & {\relax}  & {\relax}  & {\relax}  & $20$ \\
&  $0,2,2$ &3&     $8$  & {\relax}  & {\relax}  & {\relax}  & {\relax}  & $10$ \\
&  $0,2,3$ &3&     $10$  & {\relax}  & {\relax}  & {\relax}  & {\relax}  & $10$ \\
&  $0,2,4$ &3&     $12$  & {\relax}  & {\relax}  & {\relax}  & {\relax}  & $10$ \\
&  $0,2,6$ &3&     $16$  & {\relax}  & {\relax}  & {\relax}  & {\relax}  & $10$ \\
 \hline
\end{tabular}
\end{center}
\label{table:freq}
\caption{Configurations obtained in one roll of the Ten Thousand game}
\end{table}

\begin{table}
\begin{threeparttable}
\label{table:value}
\begin{footnotesize}
\begin{tabular}{c|c|c|ccc|ccc||c|c}
\multirow{3}{*}{$\tau$}	&\multicolumn{1}{|c|}{$n=5$}	&$n=4$	&\multicolumn{3}{|c|}{$n=3$}	&	\multicolumn{3}{|c||}{$n=2$}	&	\multirow{3}{*}{$\tau$}&	{{\footnotesize $n=5$}}	\\
	& \multirow{2}{*}{all $i$}&\multirow{2}{*}{all $i$}	& {\footnotesize $[2,0,0]$} &\multirow{2}{*}{{\footnotesize $[0,2,0]$}} & \multirow{2}{*}{{\footnotesize other $i$}}	&	 \multirow{2}{*}{{\footnotesize $[2,1,0]$}} & \multirow{2}{*}{{\footnotesize $[1,2,0]$}} &  {\footnotesize $[2,0,0]$}	&		&	\multirow{2}{*}{all $i$	}\\
	
	& 	&	& {\footnotesize $[1,1,0]$}&  & & & &{\footnotesize $[1,1,0]$}	&		&	\\
\hline
0	&	{\footnotesize 5.8721}	&	--	&	--	&	--	&	--	&	--	&	--	&	--	&	28	&	30.181	\\
1	&	6.607	&	4.338	&	--	&	--	&	--	&	--	&	--	&	--	&	29	&	31.102	\\
2	&	7.355	&	5.021	&	4.338	&	--	&	3.447	&	--	&	--	&	--	&	30	&	32.022	\\
3	&	8.107	&	5.743	&	5.021	&	--	&	4.123	&	--	&	--	&	3.447	&	31	&	32.943	\\
4	&	8.883	&	6.476	&	5.743	&	5.021	&	4.837	&	5.021	&	--	&	4.123	&	32	&	33.864	\\
5	&	9.713	&	7.212	&	6.476	&	5.743	&	5.552	&	5.743	&	5.021	&		&	33	&	34.785	\\
6	&	10.553	&	8.001	&	7.212	&	6.476	&	6.266	&	6.476	&		&		&	34	&	35.706	\\
7	&	11.394	&	8.840	&	8.001	&	7.212	&		&	7.212	&		&		&	35	&	36.627	\\
8	&	12.235	&	9.679	&	8.840	&	8.001	&		&	8.001	&		&		&	36	&	37.548	\\
9	&	13.077	&	10.517	&	9.679	&		&		&		&		&		&	37	&	38.469	\\
10	&	13.918	&	11.357	&	10.517	&		&		&		&		&		&	38	&	39.390	\\
11	&	14.779	&	12.196	&	11.357	&		&		&		&		&		&	39	&	40.312	\\
12	&	15.655	&	13.035	&	12.196	&		&		&		&		&		&	40	&	41.233	\\
13	&	16.534	&	13.875	&	13.035	&		&		&		&		&		&	41	&	42.154	\\
14	&	17.413	&	14.715	&		&		&		&		&		&		&	42	&	43.075	\\
15	&	18.291	&	15.554	&		&		&		&		&		&		&	43	&	43.997	\\
16	&	19.170	&	16.394	&		&		&		&		&		&		&	44	&	44.919	\\
17	&	20.060	&	17.234	&		&		&		&		&		&		&	45	&	45.840	\\
18	&	20.972	&	18.073	&		&		&		&		&		&		&	46	&	46.762	\\
19	&	21.893	&		&		&		&		&		&		&		&	47	&	47.684	\\
20	&	22.814	&		&		&		&		&		&		&		&	48	&	48.607	\\
21	&	23.734	&		&		&		&		&		&		&		&	49	&	49.529	\\
22	&	24.655	&		&		&		&		&		&		&		&	50	&	50.452	\\
23	&	25.576	&		&		&		&		&		&		&		&	51	&	51.375	\\
24	&	26.497	&		&		&		&		&		&		&		&	52	&	52.298	\\
25	&	27.418	&		&		&		&		&		&		&		&	53	&	53.221	\\
26	&	28.339	&		&		&		&		&		&		&		&	54	&	54.144	\\
27	&	29.260	&		&		&		&		&		&		&		&	55	&	55.066	\\
\hline
$\action$ & $\roll$ & $\roll$ & $\resign{5}$ & $\resign{1}$ & $\roll$ & $\resign{55}$ & $\resign{51}$ & $\roll$ & $\action$  & $\roll$ \\
\end{tabular}
\end{footnotesize}
\caption{Value function and optimal strategy}
\begin{tablenotes}
\item[---] References:
\item[-] Cells with ``--'' are unreachable.
\item[-] The missing values mean that the optimal action is $\parar$, so the value coincides with $\tau$. 
\item[-] Column $n=1$ (and also $n=2$ for ``other $i$") is omitted, since the optimal action is always $\parar$.
\item[-] The optimal action for the present values on the table depend only in the column and is indicated in the last row.
\item[---] Observations:
\item[-] All present values are greater than $\tau$, otherwise the optimal action would be $\parar$
\item[-] Values in columns for which the optimal action is a move appear also in other columns (the destination of the move). These columns could be completed ``by hand''.
\end{tablenotes}
\end{threeparttable}
\end{table}

\end{document}